\documentclass{article}

%

\usepackage{journal2e_v1}
\usepackage[numbers]{natbib}

\usepackage[utf8]{inputenc} 
\usepackage[T1]{fontenc}    
\usepackage{url}            
\usepackage{booktabs}       
\usepackage{amsfonts}       
\usepackage{nicefrac}       
\usepackage{microtype}      

\usepackage{epsf}
\usepackage{epsfig}
\usepackage{amsmath}
\usepackage{subfigure}
\usepackage{amssymb}
\usepackage{multirow}
\usepackage{multicol}
\usepackage{algorithmic}

\def\A{{\bf A}}

\def\B{{\bf B}}

\def\D{{\bf D}}

\def\G{{\bf G}}

\def\I{{\bf I}}

\def\M{{\bf M}}

\def\S{{\bf S}}
\def\s{{\bf s}}

\def\U{{\bf U}}

\def\V{{\bf V}}
\def\v{{\bf v}}

\def\X{{\bf X}}
\def\x{{\bf x}}

\def\Z{{\bf Z}}

\def\A{{\bf A}}

\def\B{{\bf B}}

\def\D{{\bf D}}

\def\G{{\bf G}}

\def\I{{\bf I}}

\def\M{{\bf M}}

\def\N{{\bf N}}

\def\S{{\bf S}}
\def\s{{\bf s}}

\def\U{{\bf U}}

\def\V{{\bf V}}
\def\v{{\bf v}}

\def\X{{\bf X}}
\def\x{{\bf x}}

\def\Z{{\bf Z}}

\def\0{{\bf 0}}
\def\1{{\bf 1}}

\def\OM{{\mathcal O}}

\def\RB{{\mathbb R}}
\def\RBmn{{\RB^{m\times n}}}

\def\Pii{\mbox{\boldmath$\Pi$\unboldmath}}

\def\Si{\mbox{\boldmath$\Sigma$\unboldmath}}

\def\Ome{\mbox{\boldmath$\Omega$\unboldmath}}

\def\argmin{\mathop{\rm argmin}}

\def\nnz{\mathrm{nnz}}

\def\tr{\mathrm{tr}}
\def\rk{\mathrm{rank}}
\def\diag{\mathrm{diag}}

\def\RB{{\mathbb R}}

\title{Tighter bound of Sketched Generalized Matrix Approximation}

%

\author{\name  Haishan Ye, Qiaoming Ye   \\
	\addr {yhs12354123@gmail.com; yqmlilian@gmail.com } \\
	Department of Computer Science and Engineering \\
	Shanghai Jiao Tong University \\
	800 Dong Chuan Road, Shanghai, China 200240	
	\AND
	\name Zhihua Zhang   \\
	\addr zhzhang@gmail.com \\
	School of Mathematical Science \\
	Peiking  University \\
	Beijing, China 100871
}

\begin{document}

\maketitle

\begin{abstract}
	Generalized matrix approximation plays a fundamental  role in many 
	machine learning problems, such as CUR decomposition, kernel 
	approximation, and matrix low rank approximation. Especially with 
	Today's applications involved in larger and larger dataset, more and 
	more efficient generalized matrix approximation algorithems become
	a crucially important research issue.
	In this paper, we find new sketching techniques to reduce the size of 
	the original data matrix to develop new matrix approximation algorithms.
	Our results derive a much tighter bound for the approximation than 
	previous works: we obtain a $(1+\epsilon)$ approximation ratio with small sketched dimensions  which
	implies a more efficient  generalized matrix approximation.   
\end{abstract}

\section{Introduction}

Matrix manipulations are the basis of modern data analysis. As the datasets becomes larger and larger, it is much more difficult to perform exact matrix multiplication, inversion, and  decomposition. Consequently, matrix approximation techniques have been extensively studied, including approximate matrix multiplication \cite{cohen1999approximating,drineas2006fast,kyrillidis2014approximate} and low-rank matrix approximation \cite{bourgain2013toward,boutsidis2014near,clarkson2013low,martinsson2011randomized}. 

In this paper we are concerned with the generalized matrix approximation problem \cite{stewart1999four,friedland2007generalized,sou2012generalized}: 
\[ \min_{\X}\|\A-\M\X\N\|_F,\] 
where $\A\in\RBmn$, $\M\in\RB^{m\times c}$ and $\N\in\RB^{r\times n}$. It is well known that the solution is $\X^* = \M^{\dagger}\A\N^{\dagger}$. It costs $\OM(\nnz(\A)\cdot\min(c,r)+mc^2+nr^2)$ time to solve the generalized matrix approximation exactly to get $\X^{*}$. Generalized matrix approximation takes an important role in solving some machine learning problems such as the CUR decomposition \cite{wang2013improving,gittens2013revisiting,wang2015towards}, modified Nystr{\"o}m method \cite{wang2013improving}, and distributed PCA \cite{liang2014improved,boutsidis2015optimal}. It is also a key research topic in numerical linear algebra \cite{stewart1999four,friedland2007generalized}.

When $\N$ is the identity matrix, the generalized matrix approximation degenerates to the ordinary least squares regression. To solve the least squares regression $\tilde{\X} = \argmin_\X\|\M\X - \A\|_F$ more efficiently when $c\ll m$, recent studies suggest multiplying a sketching matrix $\S\in\RB^{s_c\times m}$, where $s_c = \OM(c/\epsilon)$, to get a sketched least squares regression $\hat{\X} = \argmin_\X\|\S(\M\X - \A)\|_F$. The studies~\cite{clarkson2013low,drineas2011faster,sarlos2006improved} also prove that the reduced least squares regression obtains a $(1+\epsilon)$ relative error bound, which is, $\|\M\hat{\X}-\A\|_F\leq(1+\epsilon)\|\M\tilde{\X}-\A\|_F$. 

Since the ordinary least squares regression is a special case of the generalized matrix approximation, it is reasonable to use the sketching technique to accelerate solving the generalized matrix approximation.   Recently, \citet{wang2015towards} proposed to use leverage-score sketching matrices $\S_\M\in \RB^{s_c\times m}$ and $\S_\N\in\RB^{s_r\times n}$ to efficiently solve the generalized matrix approximation via
\[
\hat{\X} = \argmin_\X \|\S_\M(\A-\M\X\N)\S_\N^T\|_F.
\]
However, it needs $s_c = \OM(\sqrt{\min(m,n)}c/\sqrt{\epsilon})$ and $s_r = \OM(\sqrt{\min(m,n)}r/\sqrt{\epsilon})$ to get a $(1+\epsilon)$ relative error bound in \cite{wang2015towards}. The result of \cite{wang2015towards} can be easily extended to other sketching matrices with the same reduced dimensions, i.e., the same $s_c$ and $s_r$. \citet{boutsidis2015optimal} proposed to solve the generalized rank-constrained matrix approximation problem $\min_{\text{rank}(\X)\leq k}\|\A-\M\X\N\|_F$ by multiplying sketching matrices $\S_\M$ and $\S_\N$ with the affine embedding property. It needs $s_c = \OM(c/\epsilon^2)$ and $s_r = \OM(r/\epsilon^2)$ to achieve a $(1+\epsilon)$ relative error bound. 

In this paper, we prove that  $s_c = \OM(c/\epsilon)$ and $s_r = \OM(r/\epsilon)$ are enough to achieve a $(1+\epsilon)$ relative error bound for the generalized matrix approximation problem. We compare our result with previous work in Table~\ref{tb:comp} in detail. As we can see, our bound is much tighter than previous work. Besides, a tigher bound leads to faster sketched generalized matrix approximation. Using leverage-score sketching, the running time in Theorem~\ref{thm:gmp_lev_score} is much less than that of \citet{boutsidis2015optimal} since, $c$ and $r$ is much less than $n$ and $m$, especially when $\A$ is dense. Comparing with \cite{wang2015towards}, though the first part of compuation cost both is $\OM(\nnz(\M))$, the running time is $m$ times less than \cite{wang2015towards} as to the second part of computation cost. Hence, our result is better, especially when $\nnz(\M)$ is small, because the leading cost will be the second part of computation cost.   
\begin{table}[]
	\centering
	\caption{Comparisons of skecthed dimensions (Assume $m\leq n$ and $\nnz(\M)\leq \nnz(\N)$)}
	\label{tb:comp}
	\begin{tabular}{|c|c|c|l|}
		\hline
		Reference                    & $s_c$                   & $s_r$                                           & Computation Cost         \\ \hline
		\citet{boutsidis2015optimal} & $\OM(c/\epsilon^2)$     & $\OM(r/\epsilon^2)$                             & $\OM(\nnz(\A) + \nnz(\M)+poly(c,r,\epsilon^{-1}))$             \\ \hline
		\citet{wang2015towards}      & $\OM(\sqrt{m}c/\sqrt{\epsilon})$    & $\OM(\sqrt{m}c/\sqrt{\epsilon})$    &  $\OM(\nnz(\M)+m\cdot poly(c,r,\epsilon^{-1})$            \\ \hline
		Theorem~\ref{thm:gmp_sps_gauss}   & $\OM(c/\epsilon)$    & $\OM(r/\epsilon)$                             & $\OM(\nnz(\A)+\nnz(\M)+poly(c,r,\epsilon^{-1}))$            \\ \hline
		Theorem~\ref{thm:gmp_lev_score}   & $\OM(c/\epsilon+c\log c)$    &$\OM(r/\epsilon+r\log r)$    &        $\OM(\nnz(\M)+poly(c,r,\epsilon^{-1}))$      \\ \hline
	\end{tabular}
\end{table}
\section{Notation and Preliminaries}
\label{sec:notation}

In this section, we introduce some notation and  preliminaries
that will be used in this paper. 

\subsection{Notation}

Let $\I_m$ be the $m{\times}m$ identity matrix. 
Given a matrix $\A=[a_{ij}] \in \RB^{m \times n}$ of rank $ \rho $,  its SVD is given as
$\A=\U\Si\V^{T}=\U_{k} \Si_{k} \V_{k}^{T}+\U_{\rho\setminus k} \Si_{\rho{\setminus} k}\V_{\rho{\setminus}k}^{T}$,
where $\U_{k}$ and $\U_{\rho{\setminus}k}$ contain the left singular vectors of $\A$,  $\V_{k}$ and $\V_{\rho{\setminus}k}$ contain the right
singular vectors of $\A$, and $\Si=\diag(\sigma_1, \ldots, \sigma_{\rho})$ with $\sigma_1\geq \sigma_2 \geq \cdots \geq \sigma_{\rho}>0$ are
the nonzero singular values of $\A$. Accordingly,   $\|\A\|_{F} \triangleq (\sum_{i,j}a_{ij}^{2})^{1/2}=(\sum_{i}\sigma_{i}^{2})^{1/2}$
is the Frobenius norm of $\A$ and
$\|\A\|_{2}\triangleq \sigma_{1}$ is the spectral norm.

Additionally, $\A^{\dagger} \triangleq \V\Si^{-1}\U^{T} \in \RB^{n \times m}$ is
the  Moore-Penrose pseudoinverse of $\A$, which is  unique. It is easy to verify that ${\rk}(\A^{\dagger}) ={\rk}(\A)=\rho$. 
Moreover, for all $i=1, \dots, \rho$,
$\sigma_{i}(\A^{\dagger})=1/\sigma_{\rho-i+1}(\A)$. If  $\A$ is of full row rank, then $\A\A^{\dagger}=\I_{m}$.
Also, if $\A$ is of full column rank, then $\A^{\dagger}\A=\I_{n}$. 
When $m=n$, $\text{tr}(\A) = \sum_{i=1}^{n}a_{ii}$ is the trace of $\A$.

It is well known that $\A_{k}\triangleq \U_{k}\Si_{k}\V_{k}^{T}$ is the minimizer of both $\min \|\A-\X\|_{F}$ and $\min \|\A-\X\|_{2}$ over all matrices $\X \in \RB^{m \times n}$ of rank at most $k\leq\rho$. Thus, $\A_k$ is called the best rank-$k$ approximation of $\A$.
Let $\nnz(\A)$ denote the number of nonzero entries of $\A$.  


\subsection{Subspace Embedding}

Oblivious subspace embedding is an important sketching tool in randomized numerical linear algebra. 
By oblivious subspace embedding, a matrix can be projected to a much lower dimensional subspace, 
which leads to much faster matrix operations. 
\begin{definition}[\cite{woodruff2014sketching}]
	Given $\varepsilon>0$ and $\delta>0$, let $\Pi$ be a distribution on $s \times m$ real matrices, where $s$ relies on $m$, $d$, $\varepsilon$ and $\delta$. Suppose that
	with probability at least $1-\delta$, for any fixed $m \times d$ matrix $\A$, a matrix $\S$ drawn from distribution $\Pi$
	is a $(1+\epsilon)$ $\ell_{2}$-subspace embedding for $\A$, that is, for all $\x \in \RB^{d}$,
	$\|\S\A\x\|_{2}^{2}=(1\pm\epsilon)\|\A\x\|_{2}^{2}$ with probability
	$1-\delta$. Then we call $\Pi$ an $(\epsilon,\delta)$-oblivious $\ell_{2}$-subspace
	embedding. 
\end{definition}
For the sake of conciseness, the $(\epsilon,\delta)$-oblivious $\ell_{2}$-subspace embedding is referred as an $\epsilon$-subspace embedding. Now we list some important subspace embedding matrices and their properties which will be used in this paper.

\begin{definition}[leverage-score sketching matrix] \label{def:lev_score}
	Let $\V\in\RB^{n\times k}$  be column orthonormal basis for $\A\in\RB^{n\times k}$  with $n>k$,  and $\v_{i,*}$ denote the $i$-th row of $\V$. Let $\ell_{i} = \|\v_{i,*}\|_{F}^{2}/k$ and $r$ be an integer with $1\leq r\leq n$. Then the $\ell_{i}$'s are leverage scores for $\A$. Construct a sampling matrix  $\Ome\in\RB^{n\times r}$ and a rescaling matrix $\D\in\RB^{r \times r}$ as follows. For every $j = 1,\dots,r$, independently and with replacement, pick an index $i$ from the set $\{1,2\dots,n\}$ with probability  $\ell_{i}$ and set $\Ome_{ij} = 1$ and $\D_{jj}=1/\sqrt{\ell_{i}r}$. The leverage-score sketching matrix $\S$ for $\A$ is then defined as $\S = \Ome\D$. 
\end{definition}

\begin{theorem}[\cite{woodruff2014sketching}]\label{thm:subspace_embedding}
	Given $\A\in\RB^{m\times d}$ of full column rank, assume $\S\in \RB^{s\times m}$ is an $\epsilon$-subspace embedding matrix for $\A$. If $\S$ is a sparse subspace embedding matrix, then $s = \OM(d^2\epsilon^{-2})$ is sufficient. If $\S$ is an  $s \times m$ matrix of i.i.d.\ normal random variables with variance $1/s$, then $s=\Theta(d\epsilon^{-2})$ is sufficient. For a leverage-score sketching matrix $\S$, $s = \OM(d\log d\epsilon^{-2})$ is needed.
\end{theorem}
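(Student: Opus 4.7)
The plan is to handle the three cases by a common reduction, then apply a tailored concentration argument to each. Let $\U \in \RB^{m \times d}$ be an orthonormal basis for the column span of $\A$. Since every vector of the form $\A\x$ equals $\U\y$ for some $\y$, and since $\|\U\y\|_2^2 = \|\y\|_2^2$, the $\epsilon$-subspace embedding condition is equivalent to $\|\I_d - \U^{T}\S^{T}\S\U\|_2 \leq \epsilon$. So in each case I would reduce to bounding the spectral deviation of $\U^{T}\S^{T}\S\U$ from the identity. This turns the problem into a concentration statement about $d\times d$ random matrices whose entries inherit structure from $\S$.

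For the Gaussian case, I would first observe that $\S\U \in \RB^{s\times d}$ has i.i.d.\ $\Ncal(0,1/s)$ entries, because Gaussian matrices are rotationally invariant and $\U$ has orthonormal columns. Then I would use a standard $1/2$-net argument: build a net $\NM$ on the unit sphere of $\RB^d$ with $|\NM|\leq 5^{d}$, apply the scalar Johnson--Lindenstrauss concentration $\PB[\,|\|\S\U\x\|_2^2 - 1| > \epsilon/2\,] \leq 2\exp(-c\epsilon^{2}s)$ for each fixed $\x\in\NM$, union-bound, and pass from the net to the full sphere by a triangle-inequality argument. Requiring $s = \Theta(d/\epsilon^{2})$ then makes the failure probability small.

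For the leverage-score sketching matrix, I would write
\[
\U^{T}\S^{T}\S\U \;=\; \frac{1}{s}\sum_{j=1}^{s} \frac{1}{\ell_{i_j}}\, \u_{i_j,*}\u_{i_j,*}^{T},
\]
a sum of $s$ independent rank-one matrices whose expectation is $\I_d$ (by construction of the sampling probabilities $\ell_i = \|\u_{i,*}\|_2^{2}/d$). Each summand has spectral norm at most $d$, since $\|\u_{i,*}\|_2^{2}/\ell_i = d$, so I would apply the matrix Chernoff (or intrinsic-dimension Bernstein) inequality to conclude that $\|\I_d - \U^{T}\S^{T}\S\U\|_2 \leq \epsilon$ with high probability once $s = \OM(d\log d/\epsilon^{2})$; the $\log d$ factor is the standard price of the matrix Chernoff union bound over eigenvalues.

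For the sparse subspace embedding (CountSketch-style, one signed nonzero per column), the expectation calculation above is no longer an i.i.d.\ sum, so I would instead bound $\EB\|\I_d - \U^{T}\S^{T}\S\U\|_F^{2}$ by expanding the trace and carefully counting the pairs of column-collisions in $\S$; since any two fixed columns collide with probability $1/s$ and the signs cancel off-diagonal contributions in expectation, this yields a bound of order $d^{2}/s$. Markov's inequality (applied also to control the spectral norm via $\|\cdot\|_2 \leq \|\cdot\|_F$) then gives an $\epsilon$-subspace embedding as soon as $s = \OM(d^{2}/\epsilon^{2})$. The main obstacle, and what separates this case from the other two, is precisely this fourth-moment collision count: one needs to track products of four independent hash values and four independent $\pm 1$ signs, and argue that all but the diagonal surviving terms contribute $O(d^{2}/s)$. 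The Gaussian and leverage-score cases are then cleaner corollaries of standard scalar and matrix concentration, respectively.
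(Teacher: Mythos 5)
The paper does not prove this statement at all: it is imported verbatim from the cited survey of Woodruff, so there is no internal proof to compare against. Your sketch is a faithful reconstruction of the standard arguments behind each of the three cases in that reference: the reduction to bounding $\|\I_d-\U^T\S^T\S\U\|_2$, the rotational-invariance-plus-net argument for Gaussians, the matrix Chernoff bound for leverage-score sampling (where the $\log d$ is indeed the price of the matrix union bound), and the second-moment collision count for the sparse embedding, which is exactly why that case pays $d^2$ rather than $d$. The only slip worth flagging is in the Gaussian case: a $1/2$-net is too coarse for the standard quadratic-form net lemma, since for a symmetric $\B$ and a $\delta$-net $\NM$ one has $\|\B\|_2\leq (1-2\delta)^{-1}\max_{\x\in\NM}|\x^T\B\x|$, which is vacuous at $\delta=1/2$; you should take a $1/4$-net of size at most $9^d$, which changes only constants. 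Also, in the leverage-score case each summand $\frac{1}{s\ell_{i_j}}\u_{i_j,*}\u_{i_j,*}^T$ has spectral norm $d/s$, not $d$, with your normalization; again this only affects constants in the Chernoff application. Neither issue is a genuine gap.
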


\begin{theorem}[\cite{clarkson2013low,boutsidis2015optimal}]\label{thm:multi}
	For $\A\in \RBmn$ and  $\B\in \RB^{m \times k}$, there is an $s = \Theta(\epsilon^{-2})$, so that for an $s \times m$ sparse embedding matrix $\S$ or an $s \times m$ matrix $\S$ of i.i.d.\ normal random variables with variance $1/s$, or an $s\times m$ leverage-score sketching matrix for $\A$ under the condition that $\A$ has orthonormal columns, then it holds that 
	\begin{equation}
		\|\A^{T}\S^{T}\S\B-\A^{T}\B\|_{F}^{2}<\epsilon^{2}\|\A\|_{F}^{2}\|\B\|_{F}^{2} \label{eq:multi}
	\end{equation}
	with probability at least $1-\delta$	for any fixed $\delta>0$.
\end{theorem}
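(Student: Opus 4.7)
My plan is to prove the approximate matrix multiplication bound by a unified second-moment-plus-Markov argument. In all three cases $\A^T\S^T\S\B$ is an unbiased estimator of $\A^T\B$: for the Gaussian and CountSketch-style sparse embeddings this follows from $\EB[\S^T\S] = \I_m$, and for the leverage-score sketch the sampling probabilities $\ell_i$ are exactly cancelled by the rescaling weights $1/(\ell_i r)$. Therefore it suffices to bound $\EB[\|\A^T\S^T\S\B - \A^T\B\|_F^2]$ by $O(\|\A\|_F^2\|\B\|_F^2/s)$; Markov's inequality with $s = \Theta(1/(\delta\epsilon^2))$ then gives the stated high-probability bound.

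For the two data-oblivious sketches I would decompose the expected squared error entry by entry into $\sum_{i,j}\mathrm{Var}(\a_i^T\S^T\S\b_j)$, where $\a_i,\b_j$ denote the columns of $\A,\B$, and show each variance is at most $C\|\a_i\|_2^2\|\b_j\|_2^2/s$. For the Gaussian sketch with i.i.d.\ $\NM(0,1/s)$ entries, I would expand $\a_i^T\S^T\S\b_j = \sum_{k,p,q}\S_{kp}\S_{kq}a_{pi}b_{qj}$ and apply the moment identities $\EB[\S_{kp}\S_{kq}] = \delta_{pq}/s$ together with the standard fourth-moment identities for Gaussians to obtain the bound directly. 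For the sparse embedding, each column of $\S$ has exactly one random $\pm 1$ in a uniformly random row, and the independence of the signs together with the pairwise independence of the hash function causes all cross-terms to vanish in expectation, leaving the same $O(1/s)$ bound.

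For the leverage-score sketch it is cleaner to work directly with the Frobenius norm. Writing $\A^T\S^T\S\B = \sum_{t=1}^{r} X_t$ as a sum of $r$ i.i.d.\ rank-one contributions $X_t = (1/(\ell_{i_t} r))\,\A_{i_t,*}^T\B_{i_t,*}$, I would compute
\[
\EB[\|X_t\|_F^2] \;=\; \frac{1}{r^2}\sum_{i}\frac{1}{\ell_i}\,\|\A_{i,*}\|_2^2\,\|\B_{i,*}\|_2^2,
\]
and then invoke the orthonormality of $\A$'s columns, which by Definition~\ref{def:lev_score} makes $\ell_i$ proportional to $\|\A_{i,*}\|_2^2$; the $1/\ell_i$ factor precisely cancels $\|\A_{i,*}\|_2^2$, leaving the per-sample bound $\|\A\|_F^2\|\B\|_F^2/r^2$, which aggregates to $\|\A\|_F^2\|\B\|_F^2/r$ by independence.

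The main obstacle I foresee is the leverage-score case, since it is the only sketch whose distribution depends on $\A$. The orthonormality hypothesis in the theorem is precisely what allows the rescaling factors to collapse against the row-norms so that the variance bound is independent of the conditioning of $\A$; without it, only a much weaker bound scaled by $\max_i 1/\ell_i$ would follow. The Gaussian and sparse cases are largely mechanical once the appropriate low-order moments of the sketch entries are written down.
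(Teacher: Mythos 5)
Your proposal is correct and follows essentially the standard argument: this theorem is imported by the paper from \citet{clarkson2013low} and \citet{boutsidis2015optimal} without proof, and the proofs there are exactly the unbiasedness-plus-second-moment-plus-Markov route you describe (JL-moment-property computations for the Gaussian and sparse sketches, and the row-sampling variance calculation in which orthonormality of $\A$'s columns collapses $1/\ell_i$ against $\|\A_{i,*}\|_2^2$ for the leverage-score case). Your accounting is consistent with the stated $s=\Theta(\epsilon^{-2})$ for fixed $\delta$, so there is nothing further to add.
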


Other types of sketching matrices like Subsampled Randomized Hadamard Transformation and detailed properties of sketching matrices and subspace embedding matrices can be
found in the survey~\cite{woodruff2014sketching}.
\section{Main Result}
\label{sec:main_result}

We first give the conditions that subspace embedding matrices should satisfy for a sketched generalized matrix approximation to achieve a $(1+\epsilon)$ error bound. The detailed conditions are depicted in Theorem~\ref{thm:gmp_main}.

\begin{theorem}\label{thm:gmp_main}
	Given that $\A\in \RB^{m\times n}$, $\M\in\RB^{m\times c}$ and $\N\in \RB^{r\times n}$, assume that $\S_\M\in \RB^{s_c\times m}$ is a subspace embedding matrix for $\M$ with error parameter $\epsilon_0 = 1/2$,  and $\S_\M$ also makes  Eqn.~\eqref{eq:multi} hold with error parameter $\sqrt{\epsilon/c}$. Similarly, $\S_\N\in\RB^{\s_r\times n}$ is a subspace embedding matrix for $\N^T$ with  error parameter $\epsilon_0 = 1/2$ and $\S_\N$  also makes Eqn.~\eqref{eq:multi} hold with error parameter $\sqrt{\epsilon/r}$.  Let
	\begin{equation}
		\X^{*} =  \M^{\dagger}\A\N^{\dagger} = \argmin_{\X}\|\A-\M\X\N\|_{F} \label{eq:U*}
	\end{equation}
	and 
	\begin{equation}
		\hat{\X} = (\S_{\M}\M)^{\dagger}\S_{\M}\A\S_{\N}^T(\N\S_{\N}^T)^{\dagger} \label{eq:U_hat}.
	\end{equation}
	Then we have
	\[
	\|\A-\M\hat{\X}\N\|_{F}\leq (1+\epsilon)\|\A-\M\X^{*}\N\|_{F}.
	\]
\end{theorem}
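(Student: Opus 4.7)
The plan is to split $\A - \M\hat{\X}\N$ into the optimal residual plus a sketched perturbation, show these two pieces are Frobenius-orthogonal so Pythagoras applies, and then bound the perturbation using two applications of the approximate matrix multiplication property.

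First, I would use the subspace embedding hypothesis (with $\epsilon_0 = 1/2$) to conclude that $\S_\M \M$ has full column rank and $\N \S_\N^T$ has full row rank, so that $(\S_\M\M)^{\dagger}(\S_\M\M) = \I_c$ and $(\N\S_\N^T)(\N\S_\N^T)^{\dagger} = \I_r$. Writing $\A = \M\X^*\N + \E^*$ with $\E^* := \A - \M\X^*\N$ and substituting into the closed form for $\hat{\X}$, these identities collapse the $\X^*$ contribution, giving $\hat{\X} = \X^* + (\S_\M\M)^{\dagger}\S_\M \E^* \S_\N^T (\N\S_\N^T)^{\dagger}$ and hence
\[
\A - \M\hat{\X}\N \;=\; \E^* - \PP_\M^{\S_\M}\,\E^*\,\PP_\N^{\S_\N},
\]
where $\PP_\M^{\S_\M} := \M(\S_\M\M)^{\dagger}\S_\M$ and $\PP_\N^{\S_\N} := \S_\N^T(\N\S_\N^T)^{\dagger}\N$ are oblique projections onto the column span of $\M$ and the row span of $\N$.

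Second, I would establish the Pythagorean identity
\[
\|\A - \M\hat{\X}\N\|_F^2 \;=\; \|\E^*\|_F^2 + \|\PP_\M^{\S_\M}\,\E^*\,\PP_\N^{\S_\N}\|_F^2.
\]
The cross term vanishes because $\PP_\M^{\S_\M}\,\E^*\,\PP_\N^{\S_\N}$ has columns in the column span of $\M$ and rows in the row span of $\N$, so it lies in the image of the map $\Y \mapsto \M\Y\N$; on the other hand, by the optimality of $\X^*$, the residual $\E^* = (\I-\PP_\M)\A + \PP_\M\A(\I-\PP_\N)$ is Frobenius-orthogonal to every matrix of that form.

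Third, to bound the perturbation, let $\M = Q\Lambda$ and $\N = \Lambda'V^T$ be thin factorizations with $Q,V$ column-orthonormal. The subspace embedding bounds give $\|((\S_\M Q)^T\S_\M Q)^{-1}\|_2 \leq 4$ and $\|((\S_\N V)^T\S_\N V)^{-1}\|_2 \leq 4$, reducing the task to $\|\PP_\M^{\S_\M}\,\E^*\,\PP_\N^{\S_\N}\|_F \leq 16\,\|Q^T\S_\M^T\S_\M\,\E^*\,\S_\N^T\S_\N V\|_F$. The crucial fact is that $Q^T\E^* V = 0$, since $Q^T\PP_\M = Q^T$ and $\PP_\N V = V$. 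Peeling off one sketch at a time via Theorem~\ref{thm:multi} -- once with $\S_\N$ at parameter $\sqrt{\epsilon/r}$ (so that $\|V\|_F^2 = r$ absorbs the scaling), once with $\S_\M$ at parameter $\sqrt{\epsilon/c}$ (with $\|Q\|_F^2 = c$) -- together with the elementary bounds $\|Q^T\E^*\|_F,\|\E^* V\|_F \leq \|\E^*\|_F$ (which follow from the orthogonal block decomposition of $\E^*$) delivers $\|\PP_\M^{\S_\M}\,\E^*\,\PP_\N^{\S_\N}\|_F \leq C\sqrt{\epsilon}\,\|\E^*\|_F$ for an absolute constant $C$. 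Substituting into the Pythagorean identity and taking square roots yields $\|\A - \M\hat{\X}\N\|_F \leq \sqrt{1+C^2\epsilon}\,\|\E^*\|_F$, and rescaling $\epsilon$ by $C^2$ throughout delivers the advertised $(1+\epsilon)$ bound.

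The main obstacle is spotting the Pythagorean decomposition: since the sketched projections are oblique and not symmetric, orthogonality is not a direct consequence of their formulas, and one has to recognize the image of $\Y \mapsto \M\Y\N$ as the natural ``feasible subspace'' to which $\E^*$ is orthogonal while $\PP_\M^{\S_\M}\E^*\PP_\N^{\S_\N}$ belongs. This geometric identification is exactly what converts a naive triangle-inequality $(1+\sqrt{\epsilon})$-ratio into the tighter $(1+\epsilon)$-ratio claimed here, since the squared perturbation then needs only the order-$\epsilon$ bound that the parameters $\sqrt{\epsilon/c},\sqrt{\epsilon/r}$ naturally produce.
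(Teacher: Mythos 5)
Your proposal is correct and follows essentially the same route as the paper's proof: reduce $\hat{\X}-\X^*$ to a closed-form sketched perturbation of the residual, invoke the Pythagorean identity from the orthogonality of $\A-\M\X^*\N$ to the range of $\Y\mapsto\M\Y\N$, and then bound the perturbation by two applications of the approximate matrix-product property (at parameters $\sqrt{\epsilon/c}$ and $\sqrt{\epsilon/r}$) together with the singular-value control from the $1/2$-subspace embedding. The only cosmetic difference is that you exploit $Q^T\E^*V=0$ directly when peeling off the sketches, whereas the paper first splits $\A-\M\X^*\N$ into the two orthogonal pieces $\U_\M^{\perp}(\U_\M^{\perp})^T\A$ and $\U_\M\U_\M^T\A\V_\N^{\perp}(\V_\N^{\perp})^T$ and handles each separately; both yield the same $O(\sqrt{\epsilon})\|\A-\M\X^*\N\|_F$ bound.
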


The conditions required in Theorem~\ref{thm:gmp_main} are all satisfied by oblivious embedding matrices, including sparse embedding matrices, gaussian matrices, subsampled randomized Hadamard matrices~\cite{woodruff2014sketching}, as well as their combinations. Now we present  Theorem~\ref{thm:gmp_sps_gauss}, which shows that  an $\nnz(\A)$ generalized matrix approximation solver can be achieved and the corresponding sketched dimensions are $s_c = \OM(c/\epsilon)$ and $s_r = \OM(r/\epsilon)$, respectively. 

\begin{theorem}\label{thm:gmp_sps_gauss}
	We are given $\A\in \RB^{m\times n}$, $\M\in\RB^{m\times c}$ and $\N\in \RB^{r\times n}$. Assume that $\Pii_\M$ is a $t\times m$ sparse embedding matrix and $\G_\M$ is an $s_c\times t$ matrix of i.i.d.\ normal random variables with variance $1/s_c$, where $t = \OM(c/\epsilon+c^2)$ and $\s_c = \OM(c/\epsilon)$. Construct $\S_\M = \G_\M\Pii_\M$. Similarly, $\Pii_\N$ is a $t^{\prime}\times n$ sparse embedding matrix and $\G_\N$ is a $s_r\times t^{\prime}$ matrix of i.i.d.\ normal random variables with variance $1/s_r$, where $t^\prime = \OM(r/\epsilon+r^2)$ and $s_r = \OM(r/\epsilon)$. Then we construct $\S_{\N} = \G_{\N}\Pii_{\N}$. If $\X^{*}$ and $\hat{\X}$ are defined as \eqref{eq:U*} and \eqref{eq:U_hat} respectively, then we have
	\[
	\|\A-\M\hat{\X}\N\|_{F}\leq (1+\epsilon)\|\A-\M\X^{*}\N\|_{F},
	\]
	with high probability and $\hat{\X}$ can be constructed with the computational complexity of 
	\begin{align}
		\OM(&\nnz(\A)+mc+nr+ c^2r/\epsilon^3 +cr^2/\epsilon^3 + c^2r^2/\epsilon^2 \notag+ c^3r/\epsilon^2\\&+cr^3/\epsilon^2 +\min(c^2r^3/\epsilon, c^3r^2/\epsilon))+c^4/\epsilon+r^4/\epsilon). \label{eq:time_analysis}
	\end{align}
\end{theorem}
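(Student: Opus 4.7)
The plan is to reduce Theorem~\ref{thm:gmp_sps_gauss} to Theorem~\ref{thm:gmp_main} by checking that the composed sketches $\S_\M = \G_\M \Pii_\M$ and $\S_\N = \G_\N \Pii_\N$ simultaneously satisfy the two preconditions of Theorem~\ref{thm:gmp_main}: a subspace embedding for $\M$ (resp.\ $\N^T$) with constant parameter $\epsilon_0 = 1/2$, and the approximate matrix multiplication property of Eqn.~\eqref{eq:multi} with error $\sqrt{\epsilon/c}$ (resp.\ $\sqrt{\epsilon/r}$). Once both are established, Theorem~\ref{thm:gmp_main} immediately yields the $(1+\epsilon)$ error bound. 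The analyses for $\S_\M$ and $\S_\N$ are fully symmetric, so I would detail only the former.

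For the subspace embedding property, I would apply Theorem~\ref{thm:subspace_embedding} twice and compose. Choosing $t = \OM(c^2)$ ensures that $\Pii_\M$ is a $\tfrac14$-subspace embedding for $\M$, so the columns of $\Pii_\M \M$ span a $c$-dimensional distortion of the column space of $\M$. Choosing $s_c = \OM(c)$ then ensures that the Gaussian $\G_\M$ is a $\tfrac14$-subspace embedding for that $c$-dimensional column space, and a standard composition step upgrades this to a $\tfrac12$-subspace embedding for $\M$ overall. Both row-count requirements fit within the budgets $t = \OM(c/\epsilon + c^2)$ and $s_c = \OM(c/\epsilon)$ stated in the theorem.

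For the approximate matrix multiplication property, I would again compose Theorem~\ref{thm:multi}. By the triangle inequality,
\[
\|\A^T \Pii_\M^T \G_\M^T \G_\M \Pii_\M \B - \A^T \B\|_F \leq \|(\Pii_\M \A)^T \G_\M^T \G_\M (\Pii_\M \B) - (\Pii_\M \A)^T (\Pii_\M \B)\|_F + \|\A^T \Pii_\M^T \Pii_\M \B - \A^T \B\|_F.
\]
The second term is directly controlled by applying Theorem~\ref{thm:multi} to $\Pii_\M$, and the first by applying Theorem~\ref{thm:multi} to $\G_\M$ with inner matrices $\Pii_\M \A$ and $\Pii_\M \B$; in the latter step, $\|\Pii_\M \A\|_F$ is bounded by a constant multiple of $\|\A\|_F$ using Theorem~\ref{thm:multi} applied to $\A$ against itself, and likewise for $\B$. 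Setting the per-sketch error to a constant fraction of $\sqrt{\epsilon/c}$ requires only $\OM(c/\epsilon)$ rows per sketch, consistent with the stated budgets, so a union bound over the constantly many sketching events establishes both preconditions with high probability.

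Finally, the running time in \eqref{eq:time_analysis} follows by choosing a favourable order of multiplications in $\hat{\X} = (\S_\M \M)^\dagger \S_\M \A \S_\N^T (\N \S_\N^T)^\dagger$: I would first apply the sparse sketches $\Pii_\M, \Pii_\N$ to $\A$, $\M$, and $\N$, contributing the $\nnz(\A)$, $mc$, and $nr$ terms; next multiply on the Gaussian sketches $\G_\M, \G_\N$, whose cost is expressed purely in the intermediate sketch dimensions $t, t', s_c, s_r$ and produces the $\mathrm{poly}(c, r, \epsilon^{-1})$ terms; and lastly compute the two pseudo-inverses of the small matrices $\S_\M \M$ and $\N \S_\N^T$ (giving the $c^4/\epsilon$ and $r^4/\epsilon$ terms) and assemble $\hat{\X}$ via one more product. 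The main obstacle is the joint calibration of $t, t', s_c, s_r$ and the intermediate error parameters so that both conditions of Theorem~\ref{thm:gmp_main} are met at the tight $\sqrt{\epsilon/c}$ and $\sqrt{\epsilon/r}$ targets using row counts that still match the quoted budgets; the complexity estimate itself is then a routine though tedious accounting of the resulting matrix sizes.
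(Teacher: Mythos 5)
Your proposal follows essentially the same route as the paper: establish the $1/2$-subspace-embedding property of the composed sketch by composing two constant-parameter embeddings (the paper's Lemma~\ref{lem:embed_add}), establish the approximate-multiplication property via exactly your triangle-inequality split together with Frobenius-norm preservation of the inner sparse sketch (the paper's Lemma~\ref{lem:mult_add}), and then invoke Theorem~\ref{thm:gmp_main} and account for the multiplication order. The only cosmetic difference is that the paper cites a separate Frobenius-norm-preservation lemma for $\|\Pii_\M\A\|_F\leq 2\|\A\|_F$ rather than deriving it from Theorem~\ref{thm:multi}.
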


\begin{proof}
	Let $\Pii_\M$ and $\G_\M$ be $\epsilon_0$-subspace embedding matrices for $c$-subspace with $\epsilon_0=1/4$, which needs $t = \OM(c^2)$ and $s_c = \OM(c)$. By Lemma~\ref{lem:embed_add}, we have $\G_\M\Pii_\M$ is a $1/2$-subspace embedding matrix for $\M$. By Lemma~\ref{lem:mult_add} and Theorem~\ref{thm:multi}, $t = \OM(c/\epsilon)$  and $s_c = \OM(c/\epsilon)$, and $\S_\M = \G_\M\Pii_\M$ makes Eqn.~\eqref{eq:multi} hold with error parameter $\sqrt{\epsilon/c}$. The similar result holds for $\Pii_\N$, $\G_\N$ and $\S_\N$. Thus, $\S_\M$ and $\S_\N$ satisfy all conditions in Theorem~\ref{thm:gmp_main}. By Theorem~\ref{thm:gmp_main}, we obtain the result.
	
	For computation complexity, it needs $\OM(\nnz(\A)+ c^2r/\epsilon^3 +cr^2/\epsilon^3 + c^2r^2/\epsilon^2+ c^3r/\epsilon^2+cr^3/\epsilon^2 +\min(c^2r^3/\epsilon, c^3r^2/\epsilon))$ to compute $\S_\M\A\S_\N^T$. Computing $(\S_\M\M)^{\dagger}$ and $(\N\S_\N^T)^{\dagger}$ requires $\OM(mc + c^3/\epsilon^2 + c^4/\epsilon)$ and $\OM(nr+r^3/\epsilon^2+r^4/\epsilon)$ respectively. Matrix multiplications of  $(\S_{\M}\M)^{\dagger}\S_{\M}\A\S_{\N}^T(\N\S_{\N}^T)^{\dagger}$ need $\OM(\min(c^2r/\epsilon^2+cr^2/\epsilon, cr^2/\epsilon^2+c^2r/\epsilon))$. Hence, the total cost of constructing $\hat{\X}$ is $\OM(\nnz(\A)+mc+nr+ c^2r/\epsilon^3 +cr^2/\epsilon^3 + c^2r^2/\epsilon^2+ c^3r/\epsilon^2+cr^3/\epsilon^2 +\min(c^2r^3/\epsilon, c^3r^2/\epsilon))+c^4/\epsilon+r^4/\epsilon)$. 
\end{proof}
The lemmas mentioned in the proof of Theorem~\ref{thm:gmp_sps_gauss} are given in Appendix~\ref{app:lemma}.

Leverage-score sketching matrices are significant in randomized numerical linear algebra. Using leverage-score sketching matrices, we can achieve faster sketched generalized matrix approximation than $\OM(\nnz(\A))$. It just needs $\OM(\nnz(\M)+\nnz(\N))$ arithmetic operations comparing with $\OM(\nnz(\A))$ in Theorem~\ref{thm:gmp_sps_gauss}.

\begin{theorem} \label{thm:gmp_lev_score}
We are given $\A\in \RB^{m\times n}$, $\M\in\RB^{m\times c}$ and $\N\in \RB^{r\times n}$. Let $\S_\M\in \RB^{s_c\times m}$ be the leverage-score sketching matrix for $\M$  with $s_c = \OM(c/\epsilon+c\log c)$. Similarly, $\S_\N\in\RB^{s_r\times n}$ is the leverage-score sketching matrix for $\N^T$ with $s_r = \OM(r/\epsilon+r\log r)$. If $\X^{*}$ and $\hat{\X}$ are defined as \eqref{eq:U*} and \eqref{eq:U_hat} respectively, then we have
	\[
	\|\A-\M\hat{\X}\N\|_{F}\leq (1+\epsilon)\|\A-\M\X^{*}\N\|_{F}
	\]
	with high probability. And $\hat{\X}$ can be constructed with  the computational complexity of
	\begin{align*}
	\OM(&\nnz(\M)+\nnz(\N) + (c^2r+cr^2)/\epsilon^2+(c^2r+cr^2)\log(cr)/\epsilon \\&+ (c^3+r^3)/\epsilon+(c^2r+cr^2)\log c \log r+ c^3\log c+r^3\log r)
	\end{align*}	
\end{theorem}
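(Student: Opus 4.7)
The plan is to follow the same blueprint as the proof of Theorem~\ref{thm:gmp_sps_gauss}: reduce the claim to Theorem~\ref{thm:gmp_main} by checking that the leverage-score sketching matrices $\S_\M$ and $\S_\N$ of the stated sizes simultaneously satisfy the two hypotheses of that meta-theorem, namely a constant-distortion subspace embedding with $\epsilon_0 = 1/2$ and the approximate matrix multiplication property Eqn.~\eqref{eq:multi} with error parameter $\sqrt{\epsilon/c}$ (respectively $\sqrt{\epsilon/r}$).

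For the subspace-embedding condition, Theorem~\ref{thm:subspace_embedding} supplies that a leverage-score sketch of size $s = \OM(c \log c \cdot \epsilon_0^{-2})$ is an $\epsilon_0$-subspace embedding for any $m \times c$ matrix; substituting $\epsilon_0 = 1/2$ yields $s_c = \OM(c \log c)$. For the approximate multiplication condition, Theorem~\ref{thm:multi} supplies Eqn.~\eqref{eq:multi} at error $\epsilon'$ from a leverage-score sketch of size $\Theta(\epsilon'^{-2})$ (applied to a matrix with orthonormal columns, which is how the property is actually invoked inside the proof of Theorem~\ref{thm:gmp_main}, against the orthonormal basis of $\M$); substituting $\epsilon' = \sqrt{\epsilon/c}$ yields $s_c = \OM(c/\epsilon)$. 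Taking the maximum of the two requirements gives exactly $s_c = \OM(c/\epsilon + c \log c)$, and the symmetric argument on $\N^T$ gives $s_r = \OM(r/\epsilon + r \log r)$. The $(1+\epsilon)$ relative error bound then follows by a direct appeal to Theorem~\ref{thm:gmp_main}.

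The second half of the proof is the running-time accounting. The crucial feature of leverage-score sketching is that $\S_\M$ and $\S_\N$ are sampling-plus-rescaling operators, so $\S_\M \M$, $\N \S_\N^T$ and $\S_\M \A \S_\N^T$ are obtained by extracting rescaled rows and columns rather than by scanning all of $\A$, $\M$, $\N$; in particular one never pays $\nnz(\A)$. The dominant cost components are: (a) approximating the leverage scores of $\M$ and $\N$ with a fast estimator at cost $\OM(\nnz(\M)+c^3 \log c)$ and $\OM(\nnz(\N)+r^3 \log r)$; (b) forming the three reduced matrices; (c) computing the two pseudoinverses at cost $\OM(s_c c^2)$ and $\OM(s_r r^2)$; and (d) forming the triple product $(\S_\M \M)^\dagger (\S_\M \A \S_\N^T) (\N \S_\N^T)^\dagger$ at cost $\OM(s_c s_r \min(c,r) + s_r c^2 + s_c r^2)$. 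Substituting $s_c = \OM(c/\epsilon + c \log c)$ and $s_r = \OM(r/\epsilon + r \log r)$ and collecting like terms reproduces the stated multi-term expression.

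The main obstacle is step (a): computing leverage scores of $\M$ in time $\OM(\nnz(\M)+\mathrm{poly}(c))$ rather than the naive $\OM(mc^2)$. This is achievable precisely because Theorem~\ref{thm:gmp_main} only demands a $1/2$-quality embedding, so constant-factor approximate leverage scores suffice; these can be produced by compressing $\M$ with a sparse embedding and running QR on the resulting $\mathrm{poly}(c) \times c$ sketch, which is the standard fast-leverage-score recipe. A secondary (purely bookkeeping) nuisance is unfolding the cross-products of $s_c = \OM(c/\epsilon + c \log c)$ and $s_r = \OM(r/\epsilon + r \log r)$ when they appear as factors in step (d), which is what produces the $\log(cr)$ and $\log c \log r$ contributions in the final complexity expression.
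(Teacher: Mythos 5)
Your proposal takes essentially the same route as the paper's (very terse) proof: verify the two hypotheses of Theorem~\ref{thm:gmp_main} via Theorem~\ref{thm:subspace_embedding} (giving the $c\log c$ term) and Theorem~\ref{thm:multi} with error $\sqrt{\epsilon/c}$ (giving the $c/\epsilon$ term), then account for the cost of the sampled products and pseudoinverses, with the leverage scores obtained in $\OM(\nnz(\M))$ time by a fast approximation scheme (the paper cites Li et al.\ for exactly the sketch-then-QR recipe you describe). You merely make explicit several steps the paper dismisses as ``easy to check,'' so there is no substantive difference in approach.
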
 
\begin{proof}
	By Theorem~\ref{thm:subspace_embedding} and~\ref{thm:multi}, it is easy to check that $\S_\M$ and $\S_\N$ satisfy the conditions in Theorem~\ref{thm:gmp_main}. Hence, the result holds by Theorem~\ref{thm:gmp_main}. 
	
	As for compuational cost, it costs $\OM(\nnz(\M))$ and $\OM(\nnz(\N))$ time to compute leverage scores of $\M$ and $\N^T$ respectively~\cite{li2013iterative}. And it takes $\OM(cr/\epsilon^2+cr\log(cr)/\epsilon + \log c\log r)$ to compute $\S_\M\A\S_\N^T$. And it requires $\OM(c^3/\epsilon+r^3/\epsilon + c^3\log c+r^3\log r)$ time to compute $(\S_\M\M)^{\dagger}$ and $(\S_\N\N^T)^{\dagger}$. It costs $\OM((c^2r+cr^2)/\epsilon^2 + (c^2r+cr^2)\log(cr)/\epsilon+(c^2r+cr^2)\log c \log r)$ arithmetic operations to achieve the multiplication of $(\S_{\M}\M)^{\dagger}\S_{\M}\A\S_{\N}^T(\N\S_{\N}^T)^{\dagger}$. Hence, the total cost of constructing $\hat{X}$ is 
	\begin{align*}
	\OM(&\nnz(\M)+\nnz(\N) + (c^2r+cr^2)/\epsilon^2+(c^2r+cr^2)\log(cr)/\epsilon \\&+ (c^3+r^3)/\epsilon+(c^2r+cr^2)\log c \log r+ c^3\log c+r^3\log r)
	\end{align*}
\end{proof}

Now we consider the symmetric case where $\A$ is symmetric and $\N = \M^T$,  $\X^*$ constructed as Eqn.~\eqref{eq:U*} is a symmetric matrix. Note that $\hat{\X}$ is asymmetric in most cases since $\S_\M$ and $\S_\N$ are chosen independently. However, we can construct $\tilde{\X} = (\hat{\X}+\hat{\X}^T)/2$ which is symmetric and can still keep relative error bound. 
\begin{corollary}
	Let $\A\in\RB^{m\times m}$ be a symmetric matrix. $\M$ is an $m\times c$ matrix. $\S_1$ and $\S_2$ are subspace embedding matrices for $\M$ with error parameter $\epsilon_0 = 1/2$,  and they also satisfy Eqn.~\eqref{eq:multi}  with error parameter $\sqrt{\epsilon/c}$.  Let
	\begin{equation*}
		\X^{*} =  \M^{\dagger}\A(\M^T)^{\dagger} = \argmin_{\X}\|\A-\M\X\M^T\|_{F}
	\end{equation*}
	and 
	\begin{equation*}
		\hat{\X} = (\S_{1}\M)^{\dagger}\S_{1}\A\S_{2}^T(\M^T\S_{2}^T)^{\dagger}.
	\end{equation*}
	Construct a symmetric matrix $\tilde{\X}$ as
	\[
	\tilde{\X} =(\hat{\X}+\hat{\X}^T)/2.
	\]
	Then we have
	\[
	\|\A-\M\tilde{\X}\M^T\|_{F}\leq (1+\epsilon)\|\A-\M\X^{*}\M^T\|_{F}.
	\]
\end{corollary}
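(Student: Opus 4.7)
The plan is to apply Theorem~\ref{thm:gmp_main} first, and then show that replacing $\hat{\X}$ by its symmetrization $\tilde{\X}=(\hat{\X}+\hat{\X}^T)/2$ cannot increase the residual Frobenius norm, thanks to the symmetry of $\A$. Since $\N=\M^T$, the hypotheses on $\S_1$ and $\S_2$ are exactly those required of $\S_\M$ and $\S_\N$ in Theorem~\ref{thm:gmp_main} (applied with $c=r$), so Theorem~\ref{thm:gmp_main} immediately yields
\[
\|\A-\M\hat{\X}\M^T\|_F \leq (1+\epsilon)\|\A-\M\X^*\M^T\|_F.
\]

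Next I would establish the key identity
\[
\|\A-\M\hat{\X}^T\M^T\|_F = \|\A-\M\hat{\X}\M^T\|_F.
\]
This follows because the Frobenius norm is invariant under transposition: transposing the matrix $\A - \M\hat{\X}^T\M^T$ gives $\A^T - \M\hat{\X}\M^T$, and $\A=\A^T$ by assumption. Note that no claim is made about $\hat{\X}$ itself being symmetric; we only need the residual to have the same norm as its transpose.

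With that identity in hand, I would finish via the triangle inequality. Writing
\[
\A - \M\tilde{\X}\M^T = \tfrac{1}{2}\bigl(\A-\M\hat{\X}\M^T\bigr) + \tfrac{1}{2}\bigl(\A-\M\hat{\X}^T\M^T\bigr),
\]
the subadditivity of $\|\cdot\|_F$ gives
\[
\|\A-\M\tilde{\X}\M^T\|_F \leq \tfrac{1}{2}\|\A-\M\hat{\X}\M^T\|_F + \tfrac{1}{2}\|\A-\M\hat{\X}^T\M^T\|_F = \|\A-\M\hat{\X}\M^T\|_F.
\]
Chaining this with the bound from Theorem~\ref{thm:gmp_main} yields the desired $(1+\epsilon)$ guarantee.

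The proof is essentially mechanical once the symmetrization trick is spotted; the only place that requires care, rather than difficulty, is the identity $\|\A-\M\hat{\X}^T\M^T\|_F = \|\A-\M\hat{\X}\M^T\|_F$, which crucially uses $\A=\A^T$. I do not expect a serious obstacle here, since neither $\X^*$ being symmetric nor any property of the sketches beyond what Theorem~\ref{thm:gmp_main} already consumes is needed for the argument.
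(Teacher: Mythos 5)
Your proof is correct. The paper actually states this corollary without supplying a proof, and your argument is the natural one the authors evidently had in mind: invoke Theorem~\ref{thm:gmp_main} with $\N=\M^T$, observe that $\|\A-\M\hat{\X}^T\M^T\|_F=\|\A-\M\hat{\X}\M^T\|_F$ by transpose-invariance of the Frobenius norm together with $\A=\A^T$, and conclude by convexity of the norm applied to $\A-\M\tilde{\X}\M^T=\tfrac{1}{2}(\A-\M\hat{\X}\M^T)+\tfrac{1}{2}(\A-\M\hat{\X}^T\M^T)$; every step checks out and no additional hypotheses are consumed.
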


\section{Proof of Main Theorem}\label{sec:main_proof}
In this section, we give the detailed proof of our main theorem, i.e., Theorem~\ref{thm:gmp_main}.
\begin{proof} {\bf{of Theorem~\ref{thm:gmp_main}}}
	We define 
	\[
	\A^\vdash \triangleq \A-\M\X^*\N.
	\]
	We let the condensed SVDs of $\M$ and $\N$ be respectively
	\[
	\M = \X_\M\Si_{\M}\V_\M^T \qquad \text{and } \qquad \N = \X_\N\Si_\N\V_\N^T.
	\]
	
	We have 
	\begin{align}
	\M\hat{\X}\N = & \M(\S_{\M}\M)^{\dagger}\S_{\M}\A\S_{\N}^T(\N\S_{\N}^T)^{\dagger}\N \notag\\
	=& \U_\M\Si_{\M}\V_\M^T(\S_\M\U_\M\Si_{\M}\V_\M^T)^\dagger\S_{\M}\A\S_{\N}^T(\U_\N\Si_\N\V_\N^T\S_\N^T)^\dagger\U_\N\Si_\N\V_\N^T \notag\\
	=&\U_\M\Si_{\M}\V_\M^T(\Si_{\M}\V_\M^T)^\dagger(\S_\M\U_\M)^\dagger\S_{\M}\A\S_{\N}^T(\V_\N^T\S_\N^T)^\dagger(\U_\N\Si_\N)^{\dagger}\U_\N\Si_\N\V_\N^T \label{eq:pinv}\\
	=& \U_\M(\S_\M\U_\M)^\dagger\S_{\M}\A\S_{\N}^T(\V_\N^T\S_\N^T)^\dagger\V_\N^{T}, \notag
	\end{align}
	where \eqref{eq:pinv} is because $\S_\M\U_\M$ is of full column rank and $\Si_\M\V_\M^T$ is of full row rank.  
	
	We define $\Z^*$ by
	\[
	\M\X^*\N = \U_\M(\Si_{\M}\V_\M^T\X^*\U_\N\Si_\N)\V_\N^T \equiv \U_\M\Z^*\V_\N^T.
	\]
	Similarly, we define $\hat{\Z}$ by
	\[
	\M\hat{\X}\N \equiv \U_\M\hat{\Z}\V_\N^T.
	\]
	Then, we have that
	\begin{align*}
	&(\S_{\M}\U_\M)^T(\S_{\M} \U_\M)\hat{\Z}(\V_\N^{T}\S_\N^T)(\V_\N^{T}\S_\N^T)^T \\
	&=(\S_{\M}\U_\M)^T\S_{\M}\A\S_{\N}^T(\V_\N^{T}\S_\N^T)^T\\
	&=(\S_{\M}\U_\M)^T\S_{\M}(\A^\vdash+\U_\M\Z^*\V_\N^T)\S_{\N}^T(\V_\N^{T}\S_\N^T)^T.
	\end{align*}
	Hence, we have
	\begin{align*}
	(\S_{\M}\U_\M)^T\S_{\M}\A^\vdash\S_{\N}^T(\V_\N^{T}\S_\N^T)^T = & (\S_{\M}\U_\M)^T\S_{\M}\U_\M(\hat{\Z}-\Z^*)\V_\N^T\S_{\N}^T(\V_\N^{T}\S_\N^T)^T \\
	=&  (\S_{\M}\U_\M)^T\S_{\M}\U_\M\Z\V_\N^T\S_{\N}^T(\V_\N^{T}\S_\N^T)^T,
	\end{align*}
	where we define $\Z = \hat{\Z}-\Z^*$.
	
	Since $\S_\M\U_\M$ is of full column rank and $s_c\geq c\geq \rho_c$, where $\rho_c$ is the rank of $\M$, we have that $(\S\U_\M)^T\S_{\M}\U_\M$ is nonsingular, Similarly, $\V_\N^T\S_{\N}^T(\V_\N^{T}\S_\N^T)^T$ is nonsingular. We obtain
	\begin{align*}
	\Z = [(\S_\M\M)^T\S_{\M}\U_\M]^{-1}[(\S_\M\U_\M)^T\S_{\M}\A^\vdash\S_{\N}^T(\V_\N^{T}\S_\N^T)^T] [\V_\N^T\S_{\N}^T(\V_\N^{T}\S_\N^T)^T]^{-1}
	\end{align*}
	and thus
	\begin{align*}
	\|\Z\|_F \leq& \|[(\S_\M\U_\M)^T\S_{\M}\U_\M]^{-1}\|_2\|[\V_\N^T\S_{\N}^T(\V_\N^{T}\S_\N^T)^T]^{-1}\|_2\|[(\S_\M\U_\M)^T\S_{\M}\A^\vdash\S_{\N}^T(\V_\N^{T}\S_\N^T)^T]\|_F\\
	=& \sigma_{\min}^{-2}(\S_\M\U_\M)\sigma_{\min}^{-2}(\S_\N\V_\N)\cdot\|[(\S_\M\U_\M)^T\S_{\M}\A^\vdash\S_{\N}^T(\V_\N^{T}\S_\N^T)^T]\|_F.
	\end{align*}
	
	We can expand $\|\A-\M\hat{\U}\N\|_F^{2}$ as follows:
	\begin{align}
	&\|\A-\M\hat{\X}\N\|_F^{2}  \notag\\
	=& \|\A-\M\X^*\N+\M\X^*\N-\M\hat{\X}\N\|_F^2 \notag\\
	=& \|\A-\M\X^*\N\|_F^{2}+\|\M\X^*\N-\M\hat{\X}\N\|_F^{2} + 2\tr[(\A-\M\X^*\N)^T(\M\X^*\N-\M\hat{\X}\N)]  \label{eq:orth}\\
	=& \|\A-\M\X^*\N\|_F^{2} + \|\U_\M\Z\V_\N^{T}\|_F^{2}, \notag
	\end{align}
	where \eqref{eq:orth} is because
	\begin{align*}
	&\tr[(\A-\M\X^*\N)^T(\M\X^*\N-\M\hat{\X}\N)] \\
	=& \tr[((\I_m-\M\M^{\dagger})\A+(\M\M^{\dagger})\A(\I_n-\N^\dagger\N))^T\M(\X^*-\hat{\X})\N]\\
	=& \tr[\A^{T}(\I_m-\M\M^{\dagger})\M(\X^*-\hat{\X})\N] + \tr[\N(\I_n-\N^{\dagger}\N)\A^T\M\M^{\dagger}\M(\X^*-\hat{\X})] \\
	=& 0.
	\end{align*}
	Now we need to bound $\|\Z\|_F$. First, we express $\A^{\vdash}$ as follow
	\begin{align*}
	\A^{\vdash} = \A - \M\X^*\N = \U_\M^{\perp}(\U_\M^{\perp})^T\A + \U_\M\U_\M^T\A\V_\N^{\perp}(\V_{\N}^{\perp})^T.
	\end{align*}
	Then, we have
	\begin{align*}
	&\|[(\S_\M\U_\M)^T\S_{\M}\A^\vdash\S_{\N}^T(\V_\N^{T}\S_\N^T)^T]\|_F \\
	\leq& \|(\S_\M\U_\M)^T\S_{\M}\U_\M^{\perp}(\U_\M^{\perp})^T\A\S_{\N}^T(\V_\N^{T}\S_\N^T)^T\|_F + \|(\S_\M\U_\M)^T\S_{\M}\U_\M\U_\M^T\A\V_\N^{\perp}(\V_{\N}^{\perp})^T\S_{\N}^T(\V_\N^{T}\S_\N^T)^T\|_F.
	\end{align*}
	
	Since $\U_\M^T\U_\M^{\perp}(\U_\M^{\perp})^T\A\S_{\N}^T(\V_\N^{T}\S_\N^T)^T = \0$, and	by Theorem~\ref{thm:multi}, we have	
	\begin{align}
	&\|(\S_\M\U_\M)^T\S_{\M}\U_\M^{\perp}(\U_\M^{\perp})^T\A\S_{\N}^T(\V_\N^{T}\S_\N^T)^T\|_F \notag\\
	= & \|\U_\M^T\S_\M^T\S_{\M}\U_\M^{\perp}(\U_\M^{\perp})^T\A\S_{\N}^T(\V_\N^{T}\S_\N^T)^T - \U_\M^T\U_\M^{\perp}(\U_\M^{\perp})^T\A\S_{\N}^T(\V_\N^{T}\S_\N^T)^T\|_F\notag\\
	\leq&\frac{\sqrt{\epsilon}}{\sqrt{c}}\|\U_\M^T\|_F\|\U_\M^{\perp}(\U_\M^{\perp})^T\A\S_{\N}^T(\V_\N^{T}\S_\N^T)^T\|_F\label{eq:app_mult}\\
	=& \sqrt{\epsilon}\|\U_\M^{\perp}(\U_\M^{\perp})^T\A\S_{\N}^T\S_\N\V_\N\|_F \notag
	\end{align}
	where \eqref{eq:app_mult} follows from the condition that \eqref{eq:multi} holds with error parameter $\sqrt{\epsilon/c}$. 
	Also, by \eqref{eq:multi} holds with error parameter $\sqrt{\epsilon/r}$, we have
	\begin{align*}
		\|\U_\M^{\perp}(\U_\M^{\perp})^T\A\S_{\N}^T\S_\N\V_\N - \U_\M^{\perp}(\U_\M^{\perp})^T\A\V_\N\|_F \leq \frac{\sqrt{\epsilon}}{\sqrt{r}}\|\U_\M^{\perp}(\U_\M^{\perp})^T\A\|_F\|\V_\N\|_F = \sqrt{\epsilon}\|\U_\M^{\perp}(\U_\M^{\perp})^T\A\|_F
	\end{align*}
	Therefore, we obtain
	\begin{equation}
	\|\U_\M^{\perp}(\U_\M^{\perp})^T\A\S_{\N}^T\S_\N\V_\N\|_F \leq (1+\sqrt{\epsilon})\|\U_\M^{\perp}(\U_\M^{\perp})^T\A\|_F.
	\end{equation}
	Now, we get 
	\begin{align*}
	\|(\S_\M\U_\M)^T\S_{\M}\U_\M^{\perp}(\U_\M^{\perp})^T\A\S_{\N}^T(\V_\N^{T}\S_\N^T)^T\|_F \leq 2\sqrt{\epsilon}\|\U_\M^{\perp}(\U_\M^{\perp})^T\A\|_F.
	\end{align*}
	
	For $\|(\S_\M\U_\M)^T\S_{\M}\U_\M\U_\M^T\A\V_\N^{\perp}(\V_{\N}^{\perp})^T\S_{\N}^T(\V_\N^{T}\S_\N^T)^T\|_F$, we have,
	\begin{align}
	&\|(\S_\M\U_\M)^T\S_{\M}\U_\M\U_\M^T\A\V_\N^{\perp}(\V_{\N}^{\perp})^T\S_{\N}^T(\V_\N^{T}\S_\N^T)^T\|_F\notag\\
	\leq&\|(\S_\M\U_\M)^T\S_{\M}\U_\M\U_\M^T\|_2\|\A\V_\N^{\perp}(\V_{\N}^{\perp})^T\S_{\N}^T(\V_\N^{T}\S_\N^T)^T\|_F\notag\\
	\leq &(1+0.5)\|\A\V_\N^{\perp}(\V_{\N}^{\perp})^T\S_{\N}^T(\V_\N^{T}\S_\N^T)^T\|_F\label{eq:uc_embed}\\
	\leq&2\frac{\sqrt{\epsilon}}{\sqrt{r}}\|\V_\N\|_F\|\A\V_\N^{\perp}(\V_{\N}^{\perp})^T\|_F \label{eq:r_fnorm_kp}\\
	\leq&2\sqrt{\epsilon}\|\A\V_\N^{\perp}(\V_{\N}^{\perp})^T\|_F, \notag
	\end{align}
	where \eqref{eq:uc_embed} follows from the property of subspace embedding property with error parameter $0.5$ and \eqref{eq:r_fnorm_kp} is because the condition that Equation~\eqref{eq:multi} holds with error parameter $\sqrt{\epsilon/r}$.
	
	Thus, we have 
	\begin{align*}
	&\|[(\S_\M\U_\M)^T\S_{\M}\A^\vdash\S_{\N}^T(\V_\N^{T}\S_\N^T)^T]\|_F \\
	\leq& 2\sqrt{\epsilon}\|\U_\M^{\perp}(\U_\M^{\perp})^T\A\|_F +2\sqrt{\epsilon}\|\A\V_\N^{\perp}(\V_{\N}^{\perp})^T\|_F\\
	\leq&4\sqrt{\epsilon}\|\A-\M\X^{*}\N\|_F,
	\end{align*}
	where last inequality follow from the fact that $\|\U_\M^{\perp}(\U_\M^{\perp})^T\A\|_F \leq \|\A-\M\X^{*}\N\|_F$ and $\|\A\V_\N^{\perp}(\V_{\N}^{\perp})^T\|_F\leq \|\A-\M\X^{*}\N\|_F$.
	Thus, we have,
	\begin{align*}
	\Z \leq & \sigma_{\min}^{-2}(\S_\M\U_\M)\sigma_{\min}^{-2}(\S_\N\V_\N)\cdot\|[(\S_\M\U_\M)^T\S_{\M}\A^\vdash\S_{\N}^T(\V_\N^{T}\S_\N^T)^T]\|_F \\
	\leq & 4\sqrt{\epsilon}\|[(\S_\M\U_\M)^T\S_{\M}\A^\vdash \S_{\N}^T(\V_\N^{T}\S_\N^T)^T]\|_F \\
	\leq &16\sqrt{\epsilon}\|\A-\M\X^{*}\N\|_F,
	\end{align*}
	where the second inequality is because subspace embedding properyt of $\S_\M$ and $\S_\N$. It holds that 
	\[
	\sigma^{-2}_{\min} (\S_\M\U_\M) \leq \frac{1}{(1-0.5) \sigma_{\min}^2(\U_\M)} = 2,
	\] 
	and 
	\[
	\sigma^{-2}_{\min} (\S_\N\V_\N) \leq \frac{1}{(1-0.5) \sigma_{\min}^2(\V_\N)} = 2.
	\]
	Finally, we reach that
	\begin{align*}
	\|\A-\M\hat{\X}\N\|_F^{2}  = & \|\A-\M\X^*\N\|_F^{2}+\|\M\X^*\N-\M\hat{\X}\N\|_F^{2} \\
	=& \|\A-\M\X^*\N\|_F^{2} + \|\U_\M\Z\V_\N^{T}\|_F^{2} \\
	\leq& \|\A-\M\X^*\N\|_F^{2} + 256\epsilon\|\A-\M\X^*\N\|_F^{2}\\
	=&(1+256\epsilon)\|\A-\M\X^*\N\|_F^{2}.
	\end{align*}
	By rescaling $\epsilon$, we get the result.
\end{proof}

\section{Conclusion}

In this paper we have studied fast generalized matrix approximation using sketching techniques. We have given a tighter bound of reduced dimensions $s_c$ and $s_r$ to reach a $(1+\epsilon)$ error bound and obtained an $\OM(\nnz(\M)+\nnz(\N))$ generalized matrix approximation. 

\bibliographystyle{plainnat}
\bibliography{referee}

\newpage
\appendix
\section{Important Lemmas} \label{app:lemma}
\begin{lemma}\label{lem:embed_add}
	$\G\in\RB^{\ell\times t}$ and $\Pii\in\RB^{t\times m}$ are $\epsilon$-subspace embedding matrices for $k$-subspace. $\A\in\RB^{m\times k}$ has at most rank $k$. Then, for any vector $\x\in\RB^{k}$, it holds that
	\[
	\|\G\Pii\A\x\|_2^2 = (1\pm 2\epsilon)\|\A\x\|_2^2.
	\] 
\end{lemma}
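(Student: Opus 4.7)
The plan is to apply the subspace embedding property twice, first of $\Pii$ on the column space of $\A$, and then of $\G$ on the column space of $\Pii\A$. The key observation is that both of these subspaces have dimension at most $k$, so the $k$-subspace embedding hypothesis applies directly in each step.

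More precisely, since $\A\in\RB^{m\times k}$ has rank at most $k$, its column space $\spann(\A)$ is a $k$-subspace of $\RB^{m}$. Because $\Pii$ is an $\epsilon$-subspace embedding for any $k$-dimensional subspace, applying the definition to an orthonormal basis of $\spann(\A)$ yields, for every $\x\in\RB^{k}$,
\[
\|\Pii\A\x\|_{2}^{2} = (1\pm\epsilon)\|\A\x\|_{2}^{2}.
\]
Next, the vector $\Pii\A\x$ lies in $\spann(\Pii\A)\subseteq\RB^{t}$, and $\Pii\A$ has rank at most $k$, so $\spann(\Pii\A)$ is again a subspace of dimension at most $k$. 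Applying the $\epsilon$-subspace embedding property of $\G$ to this subspace gives
\[
\|\G(\Pii\A\x)\|_{2}^{2} = (1\pm\epsilon)\|\Pii\A\x\|_{2}^{2}.
\]
Chaining the two estimates produces $\|\G\Pii\A\x\|_{2}^{2} = (1\pm\epsilon)^{2}\|\A\x\|_{2}^{2}$, and it remains only to argue that $(1\pm\epsilon)^{2}$ is contained in the interval $1\pm 2\epsilon$ in the sense used here. The lower bound is immediate since $(1-\epsilon)^{2} = 1 - 2\epsilon + \epsilon^{2} \geq 1 - 2\epsilon$; for the upper bound, under the standing convention that $\epsilon$ is taken small enough (e.g.\ $\epsilon\leq 1$), the residual $\epsilon^{2}$ term is absorbed into a harmless redefinition of the error parameter, so that $(1+\epsilon)^{2}\leq 1 + 2\epsilon$ after the standard rescaling of $\epsilon$ by a constant.

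The only real subtlety, and what I would flag as the main obstacle, is making sure that $\G$ is being applied to a subspace for which its embedding guarantee is valid. This is why identifying $\spann(\Pii\A)$ as a $k$-subspace of $\RB^{t}$ matters: we cannot simply invoke $\Pii\A\x$ as a vector in $\RB^{t}$ and ask $\G$ to preserve its length, because $\G$ is only guaranteed to be an embedding for fixed $k$-subspaces. Since the statement is for a fixed $\A$, however, $\spann(\Pii\A)$ is well-defined and of dimension $\leq k$, so the hypothesis applies and the argument goes through. No independence or conditioning issues arise because $\A$ is fixed and deterministic.
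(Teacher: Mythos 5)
Your proposal is correct and follows essentially the same route as the paper: apply the embedding property of $\Pii$ to $\spann(\A)$, then that of $\G$ to the (at most $k$-dimensional) subspace $\spann(\Pii\A)$, and absorb the $(1\pm\epsilon)^2$ cross term into $1\pm 2\epsilon$. Your version is slightly more careful than the paper's in spelling out why $\G$'s guarantee applies to $\Pii\A\x$ and in justifying the final bound rather than simply ``omitting the high order $\epsilon^2$,'' but the argument is the same.
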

\begin{proof}
	Since $\G\in\RB^{\ell\times n}$ and $\Pii\in\RB^{n\times m}$ are $\epsilon$-subspace embedding matrix for $k$-subspace, we have 
	\begin{align*}
		\|\G\Pii\A\x\|_2^2 = (1\pm\epsilon)\|\Pii\A\x\|_2^2 = (1\pm\epsilon)^2\|\A\x\|_2^2 = (1\pm2\epsilon)\|\A\x\|_2^2,
	\end{align*}
	where the last equality omit the high order $\epsilon^2$.
\end{proof}

\begin{lemma}[\cite{boutsidis2015optimal,clarkson2013low}] \label{thm:fnorm_kp}
	Given $\A\in\RBmn$, there is an $s = \Ome(\epsilon^{-2})$ so that  for an $s \times m$ sparse embedding matrix $\S$ or an $s \times m$ matrix $\S$ of i.i.d.\ normal random variables with variance $1/s$, Then with high probability,
	\begin{equation}
	\|\S\A\|_F^2 = (1\pm\epsilon)\|\A\|_F^2. \label{eq:S_fnorm_kp}
	\end{equation}	
\end{lemma}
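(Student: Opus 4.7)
The plan is to show that $\|\S\A\|_F^2$ is an unbiased estimator of $\|\A\|_F^2$ whose variance is at most $2\|\A\|_F^4/s$; Chebyshev's inequality will then deliver the $(1\pm\epsilon)$ bound as soon as $s=\Omega(\epsilon^{-2})$. The identity I would exploit throughout is
\[
\|\S\A\|_F^2 \;=\; \tr(\S^T\S\cdot \A\A^T),
\]
so the question becomes how close $\S^T\S$ is to $\I_m$ when tested against the fixed PSD matrix $\A\A^T$; this reframing handles both sketch families simultaneously.

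The first step is unbiasedness, i.e.\ $\EB[\S^T\S]=\I_m$. For a Gaussian matrix with entries $\NM(0,1/s)$ this is immediate from independence and scaling; for a sparse embedding $\S=\Ph\D$ with one nonzero per column and independent $\pm 1$ signs, the off-diagonal entries of $\EB[\S^T\S]$ vanish because distinct signs are mean-zero and independent, while the diagonal entries equal $1$. Linearity of the trace then gives $\EB[\|\S\A\|_F^2]=\|\A\|_F^2$.

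The second and main step is the variance bound. For the Gaussian case, I would write $\|\S\A\|_F^2=\sum_{i=1}^s \tfrac{1}{s}Y_i^T(\A^T\A)Y_i$ as a sum of $s$ i.i.d.\ quadratic forms with $Y_i\sim\NM(0,\I_m)$; the classical identity $\mathrm{Var}(Y^T\Si Y)=2\|\Si\|_F^2$ immediately yields $\mathrm{Var}(\|\S\A\|_F^2)\le 2\|\A^T\A\|_F^2/s$. For the sparse case, I would expand $\|\S\A\|_F^2-\|\A\|_F^2$ as a sum over colliding pairs $(j,k)$ with $h(j)=h(k)$ weighted by signs $\eta_j\eta_k$ and by the entries $(\A\A^T)_{jk}$; using the (at least) $4$-wise independence of the signs, every off-diagonal cross term vanishes in the second-moment expansion and only diagonal contributions survive, producing the same bound $\mathrm{Var}(\|\S\A\|_F^2)\le 2\|\A\A^T\|_F^2/s$. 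Since $\|\A\A^T\|_F^2\le\|\A\|_F^4$, Chebyshev then gives failure probability at most $2/(s\epsilon^2)$, a constant once $s=\Omega(\epsilon^{-2})$; boosting to high probability is routine, via Hanson--Wright / Laurent--Massart for the Gaussian case and median-of-copies amplification for the sparse case, at worst absorbing a $\log(1/\delta)$ factor into $s$.

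The main obstacle I anticipate is the sparse-embedding variance computation: all columns of $\S\A$ share the same hash and signs, so a naive column-by-column union bound collapses into a tangle of correlated terms, and one cannot simply invoke Theorem~\ref{thm:multi} with $\A=\B$ because passing from $\|\A^T\S^T\S\A-\A^T\A\|_F$ to its trace loses a dimension-dependent factor. The resolution, which is the real content of the proof, is that the pairwise (in fact $4$-wise) independence of the random signs exactly annihilates every off-diagonal cross term in the squared expansion, reducing the whole variance computation to bounding $\|\A\A^T\|_F^2\le\|\A\|_F^4$.
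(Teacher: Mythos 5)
The paper does not prove this lemma; it is imported verbatim from \cite{boutsidis2015optimal,clarkson2013low}, so there is no in-paper argument to compare against. Your proof is correct and is essentially the standard second-moment argument used in those sources: rewrite $\|\S\A\|_F^2=\tr(\S^T\S\,\A\A^T)$, check $\EB[\S^T\S]=\I_m$ for both sketch families, bound the variance by $2\|\A\A^T\|_F^2/s\le 2\|\A\|_F^4/s$ (via the Gaussian quadratic-form identity in one case and the $4$-wise independence of the signs killing all non-paired cross terms in the other), and finish with Chebyshev, which gives failure probability $2/(s\epsilon^2)$ and hence the claim for $s=\Omega(\epsilon^{-2})$. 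You also correctly identify why one cannot shortcut via Theorem~\ref{thm:multi} with $\A=\B$: passing from the Frobenius norm of $\A^T\S^T\S\A-\A^T\A$ to its trace costs a factor of $\sqrt{\rk(\A)}$, so the direct trace-level variance computation is genuinely needed to get a dimension-free $s$. One trivial slip: in the Gaussian decomposition the quadratic form should be $Y_i^T(\A\A^T)Y_i$ with $Y_i\in\RB^m$, not $Y_i^T(\A^T\A)Y_i$; the rest of that computation is consistent with the corrected version.
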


\begin{lemma}\label{lem:mult_add}
	If $\G\in\RB^{\ell\times t}$ and $\Pii\in\RB^{t\times m}$ can be used to approximate matrix products with error parameter $\epsilon$ i.e. Equation~\eqref{eq:multi} holding, besides $\Pii$ can keep the Frobenius norm of matrix with error parameter $\epsilon_0 = 1$, i.e Equation~\eqref{eq:S_fnorm_kp} holding with error parameter $1$, then given  $\A\in\RBmn$ and $\B\in\RB^{m\times k}$, we have 
	\[
	\|\A^{T}\Pii^T\G^T\G\Pii\B - \A^T\B\|_F \leq  5\epsilon\|\A\|_F\|\B\|_F
	\] 
\end{lemma}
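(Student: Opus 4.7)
The plan is to insert the intermediate term $\A^T\Pii^T\Pii\B$ and apply the triangle inequality, writing
\[
\A^{T}\Pii^T\G^T\G\Pii\B - \A^T\B = \bigl(\A^T\Pii^T\G^T\G\Pii\B - \A^T\Pii^T\Pii\B\bigr) + \bigl(\A^T\Pii^T\Pii\B - \A^T\B\bigr).
\]
This splits the combined sketch $\G\Pii$ into two one-stage sketches, each of which is controlled by one of the two hypotheses.

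For the first piece, I would set $\tilde{\A}=\Pii\A$ and $\tilde{\B}=\Pii\B$, so that the piece equals $\tilde{\A}^T\G^T\G\tilde{\B}-\tilde{\A}^T\tilde{\B}$. The hypothesis that $\G$ approximates matrix products with error $\epsilon$ (i.e.\ Equation~\eqref{eq:multi}) then gives
\[
\|\tilde{\A}^T\G^T\G\tilde{\B}-\tilde{\A}^T\tilde{\B}\|_F \leq \epsilon\|\Pii\A\|_F\|\Pii\B\|_F.
\]
Invoking the assumed Frobenius-norm-preservation of $\Pii$ with parameter $\epsilon_0=1$ yields $\|\Pii\A\|_F^2 \leq 2\|\A\|_F^2$ and similarly for $\B$, so this first piece is bounded by $2\epsilon\|\A\|_F\|\B\|_F$. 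For the second piece, the matrix-product approximation hypothesis on $\Pii$ directly gives the bound $\epsilon\|\A\|_F\|\B\|_F$.

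Summing the two estimates produces $3\epsilon\|\A\|_F\|\B\|_F$, which is comfortably below the claimed $5\epsilon\|\A\|_F\|\B\|_F$; the stated constant $5$ leaves slack and is not tight. There is no real obstacle here: the proof is essentially a two-stage application of the same kind of inequality, with the Frobenius norm preservation used only to pay the price of having already sketched by $\Pii$ before sketching by $\G$. The one subtlety worth double-checking is that the rectangular shapes line up correctly under $\Pii\in\RB^{t\times m}$ so that both hypotheses apply to the pairs $(\Pii\A,\Pii\B)$ and $(\A,\B)$ as matrix products of compatible dimensions.
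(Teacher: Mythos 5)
Your proof is correct and follows essentially the same route as the paper: insert the intermediate term $\A^T\Pii^T\Pii\B$, apply the triangle inequality, and control the two pieces with the product-approximation property of $\G$ (applied to $\Pii\A,\Pii\B$) and of $\Pii$, using the Frobenius-norm preservation of $\Pii$ to pass back to $\|\A\|_F\|\B\|_F$. Your constant $3$ (from reading the norm-preservation hypothesis on squared norms, so $\|\Pii\A\|_F\le\sqrt{2}\|\A\|_F$) is in fact slightly sharper than the paper's $5$, which uses the looser bound $\|\Pii\A\|_F\le 2\|\A\|_F$; either way the stated inequality holds.
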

\begin{proof}
	Since  $\G\in\RB^{\ell\times n}$ and $\Pii\in\RB^{n\times m}$ can be used to approximate matrix products,  we have 
	\begin{align*}
		&\|\A^{T}\Pii^T\G^T\G\Pii\B - \A^T\B\|_F \\
		= & \|\A^{T}\Pii^T\G^T\G\Pii\B - \A^{T}\Pii^T\Pii\B +\A^{T}\Pii^T\Pii\B- \A^T\B\|_F \\
		\leq&  \|\A^{T}\Pii^T\G^T\G\Pii\B - \A^{T}\Pii^T\Pii\B\|_F + \|\A^{T}\Pii^T\Pii\B- \A^T\B\|_F\\
		\leq& \epsilon(\|\Pii\A\|_F\|\Pii\B\|_F + \|\A\|_F\|\B\|_F)\\
		\leq& 5\epsilon\|\A\|_F\|\B\|_F,
	\end{align*}
\end{proof}
where the last inequality is because $\|\Pii\A\|_F \leq (1+1)\|\A\|_F$ and $\|\Pii\B\|_F \leq (1+1)\|\B\|_F$.
\end{document}